\newenvironment{shrinkeq}[1]%缩短公式之间的距离
{ \bgroup
  \addtolength\abovedisplayshortskip{#1}
  \addtolength\abovedisplayskip{#1}
  \addtolength\belowdisplayshortskip{#1}
  \addtolength\belowdisplayskip{#1}}
{\egroup\ignorespacesafterend}
\numberwithin{equation}{section}       % 如何让公式编号与章节关联
\theoremstyle{plain}	
\newtheorem{theorem}{Theorem}[section]
\newtheorem{lemma}{Lemma}[section]
\newtheorem{corollary}{Corollary}[section]
\theoremstyle{definition}
\newtheorem{example}{Example}[section]
\numberwithin{algorithm}{section}
\begin{document}
\begin{frontmatter}
\title{Randomized QLP algorithm and error analysis \tnoteref{mytitlenote}}

\author[mymainaddress]{Nianci\ Wu}
\cortext[mycorrespondingauthor]{Corresponding author}

\author[mymainaddress]{Hua\ Xiang\corref{mycorrespondingauthor}}
\ead{hxiang@whu.edu.cn}

\address[mymainaddress]{School of Mathematics and Statistics, Wuhan University, Wuhan 430072, PR China.}

\begin{abstract}
In this paper, we describe the randomized QLP (RQLP) algorithm  and its enhanced  version (ERQLP) for computing the low rank approximation to $A$ of size $m\times n$  efficiently such that $A\approx QLP$, where $L$  is the rank-$k$  lower-triangular matrix, $Q$ and $P$ are column orthogonal matrices. The theoretical cost of the implementation of  RQLP and ERQLP only needs $\mathcal{O}(mnk)$. Moreover, we derive the upper bounds of the expected approximation error $\mathbb{E}\left [  (\sigma_{j}(A) - \sigma_{j} (L))/ \sigma_{j}(A) \right] $ for $j=1,\cdots, k$, and prove  that the $L$-values of the proposed methods  can track the singular values of $A$ accurately. These claims are supported by extensive numerical experiments.
\end{abstract}

\begin{keyword}
singular value decomposition, column pivoted QR decomposition, QLP decomposition,  randomized QLP decomposition
\vskip 8pt
{\textbf{AMS}} subject classifications: 15A18, 15A23, 65F99.
\end{keyword}

\end{frontmatter}

\section{Introduction}\label{sec1}\noindent
Given an $m\times n$ matrix $A$,  consider a randomized algorithm for computing rank-$k$ ($k\leq {\rm rank}(A)\leq n \leq m$) approximate decomposition efficiently
\begin{equation}
  A \approx Q L  P^T,
\end{equation}
where $Q$ and $P$ are column orthogonal matrices and $L$  is lower-triangular matrix. The diagonal elements of $L$ are called {\it $L$-values}.

One way for gaining SVD-type information is the column pivoted QR (CPQR) decomposition,  which  admits the form
$A\Pi = QR$,  where $\Pi $ is a  permutation matrix,  factor $R$  is a upper-triangular matrix and  the diagonal elements of $R$ are called the {\it $R$-values},  and $Q$  is orthogonal matrix.
The  partial CPQR can compute a rank-$k$ approximate column subspace of $A$ spanned by the leading $k$ columns in $A\Pi$.
%, which leads to a low rank approximation
%\begin{equation}\label{cpqr}
%  A \approx Q(:,1:k)R(1:k,:)\Pi^T.
%\end{equation}
To improve the low-rank approximation, specialized pivoting strategies will result in some {\it rank-revealing} QR factorizations, see \cite{Chan1987, Demmel2015,Golub2013, Gu1996} and the $R$-values can roughly approximate the singular values of $A$ \cite{Huckaby2002,Huckaby2005}.
%The partial QR decomposition

As another  candidate of the SVD,  G. W. Stewart \cite{Stewart1999} proposed a so called  pivoted QLP decomposition. The algorithm consists of  two  CPQR  decompositions. To be precise,  the matrix $A\Pi_0$ is first factored as $A\Pi_0 = \hat Q R$, then the matrix $R^T\Pi_1$ is factored as $R^T\Pi_1  = \hat P L^T$, resulting in
\begin{equation}\label{pivotedqlp}
  A =  Q L P^T = \hat Q\Pi_1 L  \hat P^T\Pi_0^T
\end{equation}
 with $ Q = \hat Q\Pi_1$ and $ P = \Pi_0 \hat P$, where $Q$ and $P$ are orthogonal, $\Pi_0$  and $\Pi_1$  are permutation matrices,  and $L$ is lower triangular.
  It was shown that  the $L$-values can track the singular values of $A$ far better than the $R$-values, see  \cite{Huckaby2005}.  Similarly, the truncated pivoted QLP decomposition can also be used as a low rank approximation of $A$.

% which has the potency in rank-determination, singular value approximation, and gap revelation.

%Assume that the singular values $\sigma_{k}$ of $A$ have descending order, $\sigma_{1}\geq\cdots\geq\sigma_{k}\geq\sigma_{k+1}\geq\cdots\geq\sigma_{n}$ and partition the lower-triangular matrix $L$ in \eqref{pivotedqlp} into diagonal blocks $L_{11}$ and $L_{22}$, and off-diagonal block $L_{21}$, where $L_{11}$ are $k$-by-$k$.  Huckaby and  Chan \cite{Huckaby2002} show that the convergence of $(\sigma_{j}(L_{11})^{-1} - \sigma_{j}^{-1})/\sigma_{j}^{-1}$ for $j=1,\cdots,k$, and of $(\sigma_{j}(L_{22}) - \sigma_{k+j} )/\sigma_{k+j}$ for $j=1,\cdots,n-k$ are all quadratic in the ratio $\sigma_{k+1}/\sigma_k$. This means the L-values  approximate the singular values very well when there is at least one large gap anywhere in the singular values.

 We see that  the pivoted QLP decomposition applies twice pivoted QR decomposition, once to $A$ and once to $R^T$.  The Householder triangularization with column pivoting requires about $(2mn^2 - 2n^3/3)$  floats when  matrix $A$ is of full-rank. The additional work in the reduction to lower triangular form is $4n^3/3$ floats, see \cite{Golub2013}. Thus when $m=n$ the pivoted QLP decomposition requires twice as much work as the pivoted QR decomposition. Therefore, it is a motive of this paper to design a fast algorithm to reduce the computation cost of the pivoted QLP decomposition without losing too much accuracy.

Our paper is organized as follows.  In Section \ref{sec2}, we  propose the randomized QLP decomposition algorithm and its enhanced  version. Next,  the asymptotic convergence  rates of $L$-values are discussed in Section \ref{sec3}.
Then, we give the  block version of  randomized QLP decomposition algorithm in Section \ref{sec4}. Finally, we provide the numerical experiments to confirm the effectiveness of proposed algorithms in Section \ref{sec5} and some conclusion  remarks in last section.

Throughout the paper $\sigma_i(M)$ will denote the $i$-th singular value of $M$ in descending order, and $\sigma_{\min}(M)$ is the smallest singular value of $M$. $\| M \|_F$ denotes the Frobenius norm, $\| M \|_2 = \sigma_1(M)$  is the  spectral norm.

\section{Randomized QLP decomposition}\label{sec2}\noindent
%\subsection{Randomized range finder}\noindent
Let $V$ be a given $m\times k $~$(k\leq n\leq m)$ matrix with independent columns, the  orthogonal projector of the {\it R($V$)} is defined by
$$P_V= V(V^TV)^{-1}V^T, $$
where {\it R($X$)} denotes the range of a matrix $X$.  Especially, when $V$ is orthonormal, $P_V = VV^T$.  It is easy to verify \cite{Gu2016} that
\begin{equation*}
  (A-P_VA)^T (A-P_VA) = A^T A - B^T B,
\end{equation*}
where $B=V^TA$. Based on the fact that for a matrix $M$, $\| M \|_F^2 = tr(M^T M)$,  where $tr(X)$ means the trace of a matrix $X$, thus
\begin{equation}\label{fixedrank}
  \| A - P_VA\|_F^2  = \|A \|_F^2  - \| B\|_F^2.
\end{equation}
Suppose the orthogonal basis vectors of $k$-dimensional dominant subspace of $A$ form the orthonormal matrix $V$, then we have $ A \approx P_VA$. It also can be written as
\begin{equation}\label{rankkapp2}
 \| A -  P_VA\|_{F} \approx \min\limits_{{\rm rank}(X)\leq k}\| A - X\|_F.
\end{equation}
There are similar results in the 2-norm.

Randomized range finder \cite{Halko2011} is usually a crucial step in the randomized matrix approximation framework, which can be used to approximate the matrix $V$ above. The basic idea is to use random sampling to identify the subspace capturing the dominant actions of a matrix.  To illustrate, we present a basic {\it randomized range finder} scheme as follows, see \cite{Gu2014, Halko2011, Mahoney2011, Martinsson2016,  Woolfe2008}.
\begin{enumerate}
\addtolength{\itemsep}{ -0.8 em} % 设置文献之间的间距
     \item  Draw a Gaussian random matrix $\Omega$ of size $n\times \ell$.
     \item  Compute a  sampling matrix $Y=A\Omega$ of size $m\times \ell$.
     \item  Orthonormalize the columns of $Y$ to form the $m\times \ell$ matrix $V$, e.g., using the QR, SVD, etc.
 \end{enumerate}
The number of columns $\ell$ is usually slightly larger than the target rank $k$ because we can obtain more accurate approximations of this form. We refer to this discrepancy $p =\ell-k$ as the over-sampling parameter. Usually,  $p=5$ or $p=10$ is often sufficient, see \cite{Halko2011}.
\subsection{The randomized QLP decomposition }\noindent
The randomized QLP (RQLP) decomposition can be split into two computational stages. The first is to construct a low-dimensional subspace and build an orthonormal matrix $V$ of size $m\times\ell$ that captures the action of $A$ via a randomized range finder. The second is to restrict the matrix to the subspace and then compute the QLP decomposition of the reduced matrix $B = V^T A$ , as described below.
\begin{enumerate}
\addtolength{\itemsep}{ -0.8 em} % 设置文献之间的间距
     \item Set $B = V^{T} A$ so that $P_VA = VB$.
     \item Compute the pivoted QLP decomposition  so that
      $B = Q_B L P_B^T$.
     \item Set $Q = V Q_B$, and $P = P_B$.
 \end{enumerate}
The pseudo-codes of rank-$k$ RQLP  algorithm is given in Algorithm \ref{RQLP}.
\begin{algorithm}[!h]
\caption{{\sc Randomized QLP decomposition Algorithm}}\label{RQLP}
{\bf Input: }{\it $A \in \mathbb{R}^{m\times n}$, a target rank $k\geq 2$, an oversampling parameter $p\geq 2$.}\\
{\bf Output: } $[Q,~L,~P] = {\sf RQLP}(A,k,p)$.\\
\begin{shrinkeq}{-4.5ex}
\begin{flalign*}
 &1.~\Omega = {\sf rand}(n, \ell).                    &&   \%  {\sf~Gaussian~random~matrix}~\Omega~{\sf~is~of~size}~n\times \ell  \qquad \\
 &2.~Y = A\Omega.                                         &&                                               \\
 &3.~[V, \sim] =  {\sf  qr}(Y).                           &&   \%  {\sf~Range~finder}                    \\
 &4.~B=V^T A.                                                &&                                               \\
 &5.~[Q_0, R_0, \Pi_0] = {\sf cpqr}(B).            &&   \%  {\sf~The~first~CPQR~decomposition}  \\
 &6.~[Q_1, L^T, \Pi_1] = {\sf cpqr}(R_0^T).    &&   \%  {\sf~The~second~CPQR~decomposition} \\
 &7.~Q= VQ_0\Pi_1,~~P = \Pi_0 Q_1.           &&
\end{flalign*}
\end{shrinkeq}
\vspace{2pt}
\end{algorithm}

Assume that the error matrix is $E$ in the randomized range finder, i.e., $E = A - VV^T A$. Then we can have
\begin{equation}\label{error1}
  A = QLP^T + E.
\end{equation}
The bounds on the probability of a large deviation given in  \cite[Theorem 10.5]{Halko2011} for the randomized range finder can directly apply to the output of the RQLP algorithm too.
\begin{corollary}% Frame 构架，支架，骨架
  Under the hypotheses of Algorithm {\rm\ref{RQLP}}, construct the orthonormal matrix $V$ according to the randomized range finder and perform the pivoted QLP decomposition to the reduced matrix $B=V^TA$ such that $B = Q_B L P_B^T$. If $p>2$, then the expected approximation error
 \begin{equation}\label{ERRF3}
   \mathbb{E}\left [ \| A-QLP^T\|_{F}\right]
   \leq \left (   1 + \frac{k}{p-1}\right )^{1/2} \left (    \sum\limits_{j=k+1}^{n} \sigma_{j}^2 \right )^{1/2},
 \end{equation}
 where  $\mathbb{E}$ denotes expectation, $Q = V Q_B$  and $P = P_B$.
\end{corollary}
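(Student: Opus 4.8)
Proof proposal.
The plan is to reduce the claim to the randomized range finder error bound already recorded in \cite[Theorem 10.5]{Halko2011}, by observing that steps 5--7 of Algorithm \ref{RQLP} introduce no additional error. Concretely, the two calls to {\sf cpqr} produce \emph{exact} factorizations: $B\Pi_0 = Q_0 R_0$ and $R_0^T\Pi_1 = Q_1 L^T$, so that $B = Q_0 R_0 \Pi_0^T = Q_0 \Pi_1 L Q_1^T \Pi_0^T = (Q_0\Pi_1)\, L\, (\Pi_0 Q_1)^T$. Since step 7 sets $Q = V Q_0 \Pi_1$ and $P = \Pi_0 Q_1$, and $V$, $Q_0$, $Q_1$ all have orthonormal columns while $\Pi_0,\Pi_1$ are permutation matrices, both $Q$ and $P$ have orthonormal columns, and
\[
  QLP^T = V\,(Q_0\Pi_1)\,L\,(\Pi_0 Q_1)^T = V B = V V^T A = P_V A,
\]
using $P_V = VV^T$ because $V$ is orthonormal. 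Hence the error matrix in \eqref{error1} is exactly $E = A - QLP^T = A - VV^T A$, i.e. the range finder residual.

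With that identification in place, the remaining step is to invoke the large-deviation/expectation bound for the randomized range finder. The matrix $V$ built in steps 1--3 is an orthonormal basis for $\mathrm{R}(Y)$ with $Y = A\Omega$ and $\Omega$ a standard Gaussian $n\times\ell$ matrix, $\ell = k+p$; therefore $VV^T$ is precisely the orthogonal projector $P_Y$ onto $\mathrm{R}(Y)$. Applying \cite[Theorem 10.5]{Halko2011} with target rank $k$ and oversampling $p$ (for which $p>2$, in particular $p\ge 2$, is exactly the hypothesis ensuring the bound is meaningful and finite) gives
\[
  \mathbb{E}\!\left[\|A - VV^T A\|_F\right]
  \le \left(1 + \frac{k}{p-1}\right)^{1/2}\left(\sum_{j=k+1}^{n}\sigma_j^2\right)^{1/2},
\]
and combining with $A - QLP^T = A - VV^T A$ yields \eqref{ERRF3}.

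There is no serious obstacle here; the only points requiring care are bookkeeping ones. First, one must be sure the QLP factorization used in the corollary is the \emph{full} (untruncated) one, so that $B = Q_B L P_B^T$ holds with equality and the cancellation $QLP^T = VB$ is exact; if instead a rank-$k$ truncation of $L$ were used, an extra term would appear and the stated bound would need adjustment, so the statement implicitly assumes the non-truncated form as written ($B = Q_B L P_B^T$). Second, one should confirm that the hypotheses of \cite[Theorem 10.5]{Halko2011} — standard Gaussian test matrix, $\ell = k+p$, $p\ge 2$, and $V$ an orthonormal basis of $\mathrm{R}(A\Omega)$ — are met verbatim by steps 1--3 of Algorithm \ref{RQLP}, which they are. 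The whole argument is thus a short transfer lemma plus a citation.
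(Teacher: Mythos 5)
Your proposal is correct and follows essentially the same route as the paper: identify the exact cancellation $QLP^T = V Q_B L P_B^T = VB = VV^TA$, so the RQLP error coincides with the range finder residual $E = A - VV^TA$ in \eqref{error1}, and then cite \cite[Theorem 10.5]{Halko2011} for the expected Frobenius-norm bound. Your extra bookkeeping remarks (untruncated QLP of $B$, orthonormality of $Q$ and $P$, hypotheses of the cited theorem) are consistent with what the paper leaves implicit.
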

When the error matrix $E$ is measured in the spectral norm, as opposed to the Frobenius norm, the randomized scheme is slightly further removed from optimality. For details, see \cite[Theorem 10.6]{Halko2011}.
\subsection{The enhanced RQLP decomposition }\noindent
In the case that  the given matrix $A$ is singular,  the  CPQR decomposition in the first step is reasonable, because it orders the initial singular values of $A$ and moves the zeros to the bottom of the matrix, i.e.,
\begin{equation*}
A\Pi = Q_0  \begin{bmatrix}R_{0}\\ 0 \end{bmatrix} ,
\end{equation*}
where $R_{0} $ is upper-triangular matrix.
%In the next step eliminate the off-diagonal block,
%\begin{equation*}
%R_{0}^T = Q_1 R_1, ~~{\rm i.e.,}~~
%\begin{bmatrix}  (\hat{R}_{11}^{(0)})^T & 0 \\  (\hat{R}_{12}^{(0)})^T & 0  \end{bmatrix}= Q_1
%\begin{bmatrix}              R_{11}^{(1)}     &  0 \\            0               & 0  \end{bmatrix}.
%\end{equation*}
Like the QR algorithm for computing the SVD of a real upper triangular matrix \cite{Chandrasekaran1995},  Huckaby and Chan \cite{Huckaby2002} showed  that if we perform the {\it QLP iteration}, the $L$-values produced by QLP iteration can track the singular values of $A$ far better than the ones produced by pivoted QLP decomposition.

That is to say,  we  continue to take the QR decomposition with {\it no pivoting} of the transpose of the $R$ factor produced by the last step, i.e.,
\begin{equation}
R_{i-1}^T =  Q_{i} R_{i}, \quad i =1,2,\cdots,
\end{equation}
the error bounds $(\sigma_j(R_{11}^{(i)})^{-1} - \sigma_{j}(A)^{-1})/$$\sigma_{j}(A)^{-1}$  will be improved by a quadratic factor in each step for $j=1,2,\cdots,k$, where the $k\times k$ upper triangular matrix $R_{11}^{(i)}$  is the first diagonal block of $R_{i}$, see \cite{Huckaby2002}.

Inspired by this, we use the unpivoted inner QR decomposition to  $R_{0}$ in Algorithm \ref{RQLP},  and  we can get the enhanced randomized QLP algorithm, which is denoted by {\sf ERQLP}.
For $d$ times  inner QR decomposition, the pseudo-codes of ERQLP algorithm is  given in Algorithm \ref{ERQLP}, where $d$ is an even number. A similar argument can also be used when $d$ is odd.
\begin{algorithm}[!h]
\caption{{\sc Enhanced randomized QLP decomposition Algorithm}}\label{ERQLP}
{\bf Input: }{\it $A \in \mathbb{R}^{m\times n}$, a target rank $k\geq 2$, an oversampling parameter $p\geq 2$ and an even number of inner QR iterations $d$.}\\
{\bf Output: } $[Q,~L,~P] = {\sf ERQLP}(A,k,p,d)$.\\
\begin{shrinkeq}{-4.5ex}
\begin{flalign*}
&1.~\Omega = {\sf rand}(n, \ell).                                        && \% ~  {\sf Gaussian~random~matrix}~\Omega~{\sf is~of~size}~n\times \ell\\
&2.~Y = A\Omega.                                                              &&\\
&3.~[V, \sim] =  {\sf  qr}(Y).                                               && \% ~  {\sf Range~finder}\\
&4.~B=V^T A.                                                                     && \\
&5.~[Q^{(0)}, R^{(0)}, \Pi] = {\sf cpqr}(B).                        && \%~   {\sf   The~first~CPQR~decomposition}\\
&6.~{\sf for} ~i=1,\cdots,d                                                 && \%~ d {\sf ~times~inner~QR~decomposition}\quad\qquad\\
&7.~\quad [Q^{(i)},~ R^{(i)}] = {\sf qr}([R^{(i-1)}]^T).      && \%~   {\sf Inner~QR~decomposition}\\
&8.~{\sf end}                                                                      && \\
\end{flalign*}
\end{shrinkeq}
\vspace{-1pt}
9.~$Q = V Q^{(0)} Q^{(2)}\cdots Q^{(d)}$,~$L = [R^{(d)}]^T$,
  ~$P = \Pi Q^{(1)}Q^{(3)}\cdots Q^{(d-1)}$.
\vspace{2pt}
\end{algorithm}

\subsection{Computational complexity}\noindent
In this subsection,  we analyze the theoretical cost of the implementation of  Algorithm \ref{RQLP}--\ref{ERQLP}, and we compare it to those of  the truncated pivoted QLP decomposition.

Let $C_{{\sf mm}}$,  $C_{{\sf  qr}}$ and $C_{{\sf cpqr}}$ denote the scaling constants for the cost of executing the matrix-matrix multiplication, the full QR decomposition and the CPQR decomposition, respectively. More specifically, we assume that
\begin{enumerate}
\addtolength{\itemsep}{ -0.8 em} % 设置文献之间的间距
     \item multiplying two matrices of size $m\times n$ and $n\times r$ costs $C_{{\sf mm}}mnr$.
%     \item performing a rank-$k$ approsimation from truncated SVD of a matrix of size $m\times n$, with $m\geq n$, costs $C_{{\sf  ksvd}}kmn$.
     \item performing a QR decomposition with no pivoting of a matrix of size $m\times n$, with $m\geq n$, costs $C_{{\sf  qr}}mn^2$.
     \item performing a CPQR decomposition of a matrix of size $m\times n$, with $m\geq n$, costs $C_{{\sf cpqr}}mn^2$.
 \end{enumerate}
The execution time for Algorithm \ref{RQLP} is easily seen to be
\begin{equation*}
  T_{{\sf RQLP}}  \sim 2 C_{{\sf mm}} mn\ell  + (C_{{\sf mm}}  +  C_{{\sf  qr}})m\ell^2 +  C_{{\sf  cpqr}} n\ell^2  +  C_{{\sf  cpqr}} \ell^3.
\end{equation*}
Similarly, the execution time for Algorithm \ref{ERQLP} is
\begin{equation*}
  T_{{\sf ERQLP}} \sim 2 C_{{\sf mm}} mn\ell  + (C_{{\sf mm}}  +  C_{{\sf  qr}})m\ell^2 +  C_{{\sf  cpqr}} n\ell^2 + d ( C_{{\sf mm}} + C_{{\sf  qr}})  \ell^3.
\end{equation*}
To compute the rank-$\ell$  pivoted QLP decomposition, we require  to apply  partial CPQR decomposition on the $m$-by-$n$ matrix $A$, i.e., $ A \Pi(:, 1:\ell) =  Q R $,  and compute the full CPQR decomposition of the $n$-by-$\ell$ matrix $\begin{bmatrix} R_{11} & R_{12}  \end{bmatrix}^T$. This gives  a total execution time of
\begin{equation*}
T_{{\sf QLP}} \sim   C_{{\sf  cpqr}} mn\ell  + C_{{\sf  cpqr}} n\ell^2.
\end{equation*}
Compared with  $T_{{\sf QLP}}$ by omitting the lower order terms,  we see  that  $T_{{\sf RQLP}}$ and $T_{{\sf ERQLP}}$   have the same order of magnitude in cost flops. But the truncated QLP is very time-consuming to permute the data required in CPQR for the $m$-by-$n$ matrix $A$, while the most flops for RQLP and ERQLP  are spent on the matrix-matrix multiplications which is the so-called nice BLAS-3 operations, and the original large-scale matrix $A$ is visited only twice.

%We also compute the operation count for RQLP and ERQLP, showing its to run in $\mathcal{O}(mnk)$

\section{Error analysis}\label{sec3}\noindent
In this section, we assess the ability of the approximate QLP decomposition, i.e., RQLP and ERQLP, to capture the singular values. We start with the following lemmas.
\begin{lemma}\label{Thm31}{\rm \cite[Theorem 3.1]{Gu2014}}
Let $A$ be an $m$-by-$n$ matrix and $V$ be a matrix with orthonormal columns. Then $\sigma_j(A) \geq \sigma_j(V^T A)$ for $1\leq j \leq \min(m,n)$.
 \end{lemma}

\begin{lemma}\label{Thm56}{\rm \cite[Theorem 5.6]{Gu2014}}
Under the hypotheses of Algorithm {\rm\ref{RQLP}}, construct the column orthogonal matrix $V$ and the reduced matrix $B = V^T A$ according to the randomized range finder, and let $B_k$ be the rank-$k$ truncated SVD of $B$. Then for any $p \geq 2$,
\begin{equation*}\label{ERRF3}
   \mathbb{E}\left [ \sigma_{j} (VB_k)\right]\geq \frac{\sigma_{j}(A)}{  \sqrt{1 + \mathcal{C}^2 \tau_j^2} },
 \end{equation*}
 where $\mathcal{C} = 4e\sqrt{\ell} (\sqrt{n-\ell + p } + \sqrt{\ell} + 7)$, $\tau_j = \sigma_{k +1}(A)/\sigma_{j}(A)$, $j=1,\cdots,k$.
\end{lemma}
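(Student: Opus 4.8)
The plan is to follow the usual two-stage route for randomized low-rank error analysis: first a deterministic lower bound for $\sigma_j(VB_k)$ in terms of the Gaussian blocks of $\Omega$, then an expectation estimate. I would begin with the purely linear-algebraic remark that only $\sigma_j(V^{T}A)$ is at stake: since $V$ has orthonormal columns $\sigma_j(VB_k)=\sigma_j(B_k)$, and since $B_k$ is the best rank-$k$ approximation of $B=V^{T}A$ one has $\sigma_j(B_k)=\sigma_j(B)=\sigma_j(V^{T}A)$ for $1\le j\le k$. So it remains to bound $\mathbb{E}[\sigma_j(V^{T}A)]$ from below, with the matching upper bound $\sigma_j(V^{T}A)\le\sigma_j(A)$ already supplied by Lemma~\ref{Thm31}.

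For the deterministic bound I would write the economy SVD $A=U\Sigma W^{T}$ and split at index $k$: $W=[\,W_1\ W_2\,]$, $U=[\,U_1\ U_2\,]$, $\Sigma=\mathrm{diag}(\Sigma_1,\Sigma_2)$ with $\Sigma_1=\mathrm{diag}(\sigma_1,\dots,\sigma_k)$. Put $\Omega_1=W_1^{T}\Omega$ (of size $k\times\ell$) and $\Omega_2=W_2^{T}\Omega$; almost surely $\Omega_1$ has full row rank, so $\Omega_1\Omega_1^{\dagger}=I_k$. With $F=\Omega_2\Omega_1^{\dagger}$ and $G=\Sigma_2F\Sigma_1^{-1}$, the matrix $Z=A\Omega\,\Omega_1^{\dagger}=(U_1+U_2G)\Sigma_1$ obeys $\mathrm{range}(Z)\subseteq\mathrm{range}(A\Omega)=\mathrm{range}(V)$, so $P_ZP_V=P_Z$ and hence, by Courant--Fischer, $\sigma_j(V^{T}A)=\sigma_j(P_VA)\ge\sigma_j(P_ZA)$. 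Then I would evaluate $\sigma_j(P_ZA)^{2}=\lambda_j(A^{T}P_ZA)$: with $\bar Z=U_1+U_2G$ one has $\bar Z^{T}\bar Z=I+G^{T}G$ and $\bar Z^{T}A=[\,\Sigma_1\ G^{T}\Sigma_2\,]W^{T}$, and a short manipulation rewrites $A^{T}P_ZA$ as $W\bigl(\tilde D^{T}M\tilde D\bigr)W^{T}$ with $\tilde D=[\,I_k\ \ \Sigma_1^{-2}F^{T}\Sigma_2^{2}\,]$ and $M=\Sigma_1^{2}(\Sigma_1^{2}+F^{T}\Sigma_2^{2}F)^{-1}\Sigma_1^{2}$. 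The leading $k\times k$ block of $\tilde D^{T}M\tilde D$ equals $M$, so Cauchy interlacing gives $\sigma_j(P_ZA)^{2}\ge\lambda_j(M)$. The crux is the semidefinite bound
\begin{equation*}
M^{-1}=\Sigma_1^{-2}+\Sigma_1^{-2}F^{T}\Sigma_2^{2}F\Sigma_1^{-2}\ \preceq\ \mathrm{diag}\!\Bigl(\sigma_i^{-2}+\sigma_{k+1}^{2}\|F\|_2^{2}\,\sigma_i^{-4}\Bigr)_{i=1}^{k},
\end{equation*}
which needs only $\|F^{T}\Sigma_2^{2}F\|_2\le\sigma_{k+1}^{2}\|F\|_2^{2}$; reversing the order of the eigenvalues yields $\lambda_j(M)\ge\sigma_j^{2}/(1+\tau_j^{2}\|F\|_2^{2})$ with $\tau_j=\sigma_{k+1}/\sigma_j$, and therefore
\begin{equation*}
\sigma_j(V^{T}A)\ \ge\ \frac{\sigma_j(A)}{\sqrt{\,1+\tau_j^{2}\,\|\Omega_2\Omega_1^{\dagger}\|_2^{2}\,}},\qquad 1\le j\le k .
\end{equation*}

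For the probabilistic step I would use that, by rotational invariance of the Gaussian measure, $\Omega_1$ and $\Omega_2$ are independent Gaussian matrices of sizes $k\times\ell$ and $(n-\ell+p)\times\ell$. Since $x\mapsto(1+\tau_j^{2}x)^{-1/2}$ is convex on $[0,\infty)$, Jensen's inequality applied to $x=\|\Omega_2\Omega_1^{\dagger}\|_2^{2}$ gives $\mathbb{E}[\sigma_j(V^{T}A)]\ge\sigma_j(A)/\sqrt{1+\tau_j^{2}\,\mathbb{E}\|\Omega_2\Omega_1^{\dagger}\|_2^{2}}$, and by submultiplicativity and independence $\mathbb{E}\|\Omega_2\Omega_1^{\dagger}\|_2^{2}\le\mathbb{E}\|\Omega_2\|_2^{2}\cdot\mathbb{E}\|\Omega_1^{\dagger}\|_2^{2}$. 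Gordon's bound on the expected operator norm together with Gaussian concentration gives $\mathbb{E}\|\Omega_2\|_2^{2}\le(\sqrt{n-\ell+p}+\sqrt{\ell}+7)^{2}$, while $\mathbb{E}\|\Omega_1^{\dagger}\|_2^{2}\le\mathbb{E}\|\Omega_1^{\dagger}\|_F^{2}=k/(p-1)\le 16e^{2}\ell$, which is exactly where the hypothesis $p\ge2$ enters. Multiplying the two estimates gives $\mathbb{E}\|\Omega_2\Omega_1^{\dagger}\|_2^{2}\le\mathcal{C}^{2}$, and hence the asserted bound.

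The step I expect to be the real obstacle is the deterministic estimate — specifically, obtaining the ratio $\tau_j=\sigma_{k+1}/\sigma_j$ rather than the cruder $\sigma_{k+1}/\sigma_k$ that would come out of a bound such as $\lambda_j(M)\ge\sigma_{\min}\bigl((I+G^{T}G)^{-1}\bigr)\,\sigma_j(\Sigma_1)^{2}$. Avoiding that loss is what forces one to keep the perturbation $F^{T}\Sigma_2^{2}F$ flanked by $\Sigma_1^{-2}$, so that it can be dominated by a diagonal matrix whose $i$-th entry carries $\sigma_i$ and not $\sigma_k$, and then to track the index reversal $\lambda_j(M)=1/\lambda_{k+1-j}(M^{-1})$ carefully. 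The Jensen step only requires picking the right convex function, and the Gaussian moment bounds are routine and influence only the constant $\mathcal{C}$.
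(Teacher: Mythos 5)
The paper itself contains no proof of this lemma --- it is imported verbatim from \cite[Theorem 5.6]{Gu2014} --- so the only meaningful comparison is with that source, and your reconstruction is essentially the argument given there, executed correctly. The reduction $\sigma_j(VB_k)=\sigma_j(B_k)=\sigma_j(B)=\sigma_j(V^TA)$ for $j\le k$ is right, and your deterministic stage (the split $\Omega_1=W_1^T\Omega$, $\Omega_2=W_2^T\Omega$, the surrogate $Z=A\Omega\,\Omega_1^{\dagger}$ with $\mathrm{range}(Z)\subseteq\mathrm{range}(V)$, the identity $A^TP_ZA=W(\tilde D^TM\tilde D)W^T$, interlacing, and the Loewner bound on $M^{-1}$) is sound; the one point you should make explicit is that the index reversal works because $d_i=\sigma_i^{-2}+\sigma_{k+1}^2\|F\|_2^2\sigma_i^{-4}$ is increasing in $i$, so $\lambda_{k+1-j}$ of the diagonal comparison matrix is exactly $d_j$. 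Where you deviate mildly from \cite{Gu2014} is the probabilistic stage: there the constant $4e\sqrt{\ell}$ arises from deviation bounds for $\|\Omega_1^{\dagger}\|_2$, whereas you apply Jensen with the convex map $x\mapsto(1+\tau_j^2x)^{-1/2}$ and the moment bounds $\mathbb{E}\|\Omega_1^{\dagger}\|_2^2\le\mathbb{E}\|\Omega_1^{\dagger}\|_F^2=k/(p-1)$ (finite precisely because $p\ge2$, which is indeed where that hypothesis enters) and $\mathbb{E}\|\Omega_2\|_2^2\le(\sqrt{n-\ell+p}+\sqrt{\ell}+7)^2$, using independence of $\Omega_1,\Omega_2$ from rotational invariance; this is legitimate and in fact yields a constant no larger than $\mathcal{C}^2=16e^2\ell\,(\sqrt{n-\ell+p}+\sqrt{\ell}+7)^2$, so the stated inequality follows a fortiori. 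The remaining ingredients you leave implicit --- almost-sure full row rank of $\Omega_1$ and the second-moment operator-norm estimate via Gaussian concentration --- are standard, so I see no genuine gap.
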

For all $ 1 \leq j \leq k$, Lemma \ref{Thm56} implies that
\begin{equation}\label{ERRF3}
   \mathbb{E}\left [ \frac{\sigma_{j}(A) - \sigma_{j} (VB_k)}{ \sigma_{j}(A)}\right] \leq    1 - \frac{1}{ \sqrt{1 + \mathcal{C}^2 \tau_j^2} }.
 \end{equation}
Then we have the following theorem to  illustrate that the L-values of RQLP can track the singular values of $A$ precisely.
\begin{theorem}\label{Thm3.1}
 Under the hypotheses of Algorithm {\rm\ref{RQLP}}, compute the pivoted QLP decomposition so that $B = Q_B L P_B^T$ and partition the lower triangular matrix $L$ into diagonal blocks $L_{11}$ and $L_{22}$ and off-diagonal block $L_{21}$, where $L_{11}$ is of size $k$-by-$k$. Assume that the bounds
\begin{equation}\label{bound1}
\gamma  \geq \sigma_k(B)/\sqrt{k(n-k+1)},\quad \| L_{22} \|_2\leq \sigma_{k+1}(B)\sqrt{(k+1)(n-k)}
\end{equation}
 hold, where $\gamma = \sigma_{\min}(L_{11})$  and $\rho=\| L_{22} \|_2 /\gamma < 1$. Then for $ 1 \leq j \leq k$,
 \begin{equation}\label{Thm1}
\mathbb{E}\left [ \frac{\sigma_{j}(A) - \sigma_{j} (L_{11})}{ \sigma_{j}(A)}\right] \leq
1 - \frac{1}{ \sqrt{1 + \mathcal{C}^2 \tau_j^2} }  + \mathcal{O}\left ( \frac{  \| L_{21}\|_2^2  }{  (1 - \rho^2) \gamma^2} \right ),
\end{equation}
where $\tau_j = \sigma_{k +1}(A)/\sigma_{j}(A)$.
\end{theorem}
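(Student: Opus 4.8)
The plan is to combine the randomized range-finder bound of Lemma~\ref{Thm56} (in the form \eqref{ERRF3}) with a deterministic perturbation estimate relating the singular values of $L_{11}$ to those of $B$, and then to chain the two together. Write $\sigma_j(A)-\sigma_j(L_{11}) = \bigl(\sigma_j(A)-\sigma_j(VB_k)\bigr) + \bigl(\sigma_j(VB_k)-\sigma_j(L_{11})\bigr)$. Since $V$ has orthonormal columns, $\sigma_j(VB_k)=\sigma_j(B_k)=\sigma_j(B)$ for $1\le j\le k$, so the first bracket is controlled directly by \eqref{ERRF3} after dividing by $\sigma_j(A)$, contributing the term $1 - 1/\sqrt{1+\mathcal C^2\tau_j^2}$. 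It remains to show that the second bracket, normalized by $\sigma_j(A)$, is $\mathcal O\!\bigl(\|L_{21}\|_2^2/((1-\rho^2)\gamma^2)\bigr)$ in expectation — but since this is a purely deterministic statement about the pivoted QLP factor $L$ of $B$, it suffices to prove $\sigma_j(B)-\sigma_j(L_{11}) = \mathcal O(\sigma_j(A)\,\|L_{21}\|_2^2/((1-\rho^2)\gamma^2))$ deterministically, and the expectation is then trivial.

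For the deterministic step I would invoke the classical block-triangular perturbation analysis (Stewart's QLP analysis, cf.\ \cite{Stewart1999, Huckaby2005}, and the bounds assumed in \eqref{bound1}). Partition
\[
L = \begin{bmatrix} L_{11} & 0 \\ L_{21} & L_{22}\end{bmatrix},
\]
and compare $L$ with the block-diagonal matrix $\widetilde L = \mathrm{diag}(L_{11}, L_{22})$. One shows, using the interlacing/gap argument under the hypothesis $\rho = \|L_{22}\|_2/\gamma < 1$, that for $1\le j\le k$ the $j$-th singular value of $L$ (equivalently of $B$, since $B=Q_BLP_B^T$) differs from $\sigma_j(L_{11})$ by a quantity that is quadratically small in $\|L_{21}\|_2$: a standard two-sided estimate gives
\[
0 \le \sigma_j(B) - \sigma_j(L_{11}) \le \frac{\|L_{21}\|_2^2}{(1-\rho^2)\,\gamma}\bigl(1 + o(1)\bigr),
\]
the $1-\rho^2$ in the denominator coming from the spectral separation between the $L_{11}$-block and the $L_{22}$-block. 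Dividing by $\sigma_j(A)$ and using $\sigma_j(A)\ge\sigma_j(B)=\sigma_j(L)\ge\gamma$ for $j\le k$ (Lemma~\ref{Thm31} together with $\gamma=\sigma_{\min}(L_{11})$) converts $1/\gamma$ into $\mathcal O(1/\gamma^2)$ relative to $\sigma_j(A)$, yielding exactly the stated $\mathcal O\bigl(\|L_{21}\|_2^2/((1-\rho^2)\gamma^2)\bigr)$ error term. The role of the explicit bounds in \eqref{bound1} is to guarantee that $\rho<1$ and that the off-diagonal coupling is genuinely of lower order, so that the $o(1)$ is absorbed into the $\mathcal O(\cdot)$.

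Putting the pieces together: take expectations of the split identity, bound the first term by \eqref{ERRF3}, bound the second (deterministic) term by the perturbation estimate, and add. The main obstacle is the deterministic perturbation step — specifically, getting the quadratic dependence on $\|L_{21}\|_2$ with the correct $(1-\rho^2)\gamma^2$ denominator rather than a crude linear bound; this requires carefully exploiting the triangular structure and the spectral gap encoded in $\rho<1$, essentially reproducing (a one-step version of) the QLP convergence analysis of Huckaby--Chan. Everything else is bookkeeping: the orthogonal invariance $\sigma_j(VB_k)=\sigma_j(B)$, the interlacing inequalities of Lemma~\ref{Thm31}, and linearity of expectation.
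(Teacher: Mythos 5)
Your proposal is correct and follows essentially the same route as the paper: split $\sigma_j(A)-\sigma_j(L_{11})$ into the randomized part controlled by Lemma~\ref{Thm56} (via \eqref{ERRF3}) and a deterministic block-triangular perturbation part for the QLP factor of $B$, then normalize using $\sigma_j(B)=\sigma_j(V^TA)\le\sigma_j(A)$ from Lemma~\ref{Thm31}. The ``main obstacle'' you defer is precisely the cited result of Mathias and Stewart \cite{Mathias1993} (their Theorem 3.1), which under \eqref{bound1} gives the relative bound $\sigma_j(L_{11})/\sigma_j(B)\ge\bigl[1-\|L_{21}\|_2^2/((1-\rho^2)\gamma^2)\bigr]^{1/2}$ — this suffices after dividing by $\sigma_j(A)$, so nothing needs to be re-derived; note only that your stronger absolute intermediate estimate $\sigma_j(B)-\sigma_j(L_{11})\le\|L_{21}\|_2^2/((1-\rho^2)\gamma)$ does not follow from that citation when $\sigma_j(B)\gg\gamma$, but it is also not needed.
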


\begin{proof}
The rank revealing QR decomposition provides bounds on the singular values of $B$ in terms of the norms of the blocks. By  Theorem 3.1 of Reference \cite{Mathias1993}, when the bounds \eqref{bound1} hold, we can have
\begin{equation*}
\frac{\sigma_j(L_{11})}{ \sigma_{j}(B) } \geq \left [   1 - \frac{  \| L_{21}\|_2^2  }{  (1 - \rho^2) \gamma^2  }  \right ]^{1/2}
\end{equation*}
for $ 1 \leq j \leq k$, which is equivalent to
\begin{equation*}
\frac{\sigma_j(L_{11})}{ \sigma_{j}(B) } \geq 1 -   \mathcal{O}\left ( \frac{  \| L_{21}\|_2^2  }{  (1 - \rho^2) \gamma^2} \right).
\end{equation*}
The above relation  can be further rewritten as
\begin{equation}\label{sigamBL2}
\sigma_{j}(B) -  \sigma_j(L_{11})  \leq \sigma_{j}(B)  \mathcal{O}\left ( \frac{  \| L_{21}\|_2^2  }{  (1 - \rho^2) \gamma^2} \right ).
\end{equation}
Let $V$  be the column orthogonal matrix constructed by Algorithm {\rm\ref{RQLP}}. Using Lemma \ref{Thm31}, we have
\begin{equation*}
\sigma_{j}(B) = \sigma_j(V^T A) \leq \sigma_j(A).
\end{equation*}
From this,  the formula \eqref{sigamBL2}  is reduced to
\begin{equation}\label{sigamBL3}
\frac{\sigma_{j}(B) -  \sigma_j(L_{11})}{\sigma_{j}(A)}  \leq \mathcal{O}\left ( \frac{  \| L_{21}\|_2^2  }{  (1 - \rho^2) \gamma^2} \right ).
\end{equation}
On the other hand,
 \begin{equation*}
   \begin{aligned}
   \mathbb{E}\left [   \sigma_j(A) - \sigma_j (L_{11}) \right]
   &  =  \mathbb{E}\left [ \sigma_j(A)  - \sigma_j(VB_k) \right] +  \mathbb{E}\left [ \sigma_j(VB_k)  -  \sigma_j (L_{11}) \right]\\
   &  =  \mathbb{E}\left [ \sigma_j(A)  - \sigma_j(VB_k) \right] +  \mathbb{E}\left [ \sigma_j(B)  -  \sigma_j (L_{11}) \right].
   \end{aligned}
\end{equation*}
Thus,
\begin{equation*}
\mathbb{E}\left [ \frac{\sigma_{j}(A) - \sigma_{j} (VB_k)}{ \sigma_{j}(A)}\right] =
\mathbb{E}\left [ \frac{\sigma_{j}(A) - \sigma_{j} (VB_k)}{ \sigma_{j}(A)}\right] +
\mathbb{E}\left [ \frac{\sigma_{j}(B) - \sigma_{j}(L_{11})}{\sigma_{j}(A)}\right].
\end{equation*}
Combing \eqref{ERRF3} and \eqref{sigamBL3}, we arrive at \eqref{Thm1}.
\end{proof}

Next, we can  obtain the error upper bounds for the $L$-values of ERQLP  similarly.
\begin{lemma} {\rm \cite[Theorem 4.1]{Huckaby2002}}
Let $B$ be an $\ell $-by-$n$ matrix, $n\geq \ell \geq k$ and the singular values of $B$ satisfy $\sigma_{k}(B)>\sigma_{k+1}(B)$. The upper triangular matrix $R^{(0)}$ is the $R$-factor in the first  pivoted QR decomposition of $B$, i.e., $B\Pi = Q^{(0)} R^{(0)}$  and then apply $i$ times inner unpivoted QR decomposition to $[R^{(i-1)}]^T$, i.e., $[R^{(i-1)}]^T = Q^{(i)}R^{(i)}$ ($i=1,2,\cdots$).  Partition  the upper triangular matrix $R^{(i)}$ into diagonal block
$R_{11}^{(i)}$ and $R_{22}^{(i)}$ and off-diagonal block $R_{12}^{(i)}$, where $R_{11}^{(i)}$ is of size $k$-by-$k$. Assume the bounds
  \begin{equation}\label{LRbound}
  \| R_{22}^{(0)}\|_2 \leq \sqrt{(k+1)(n-k)} \sigma_{k+1}(B),~\gamma^{(0)}\geq \frac{\sigma_k}{\sqrt{k(n-k+1)}},~\rho^{(i)}  <1
\end{equation}
hold, where $\gamma^{(i)} = \sigma_{\min}(R_{11}^{(i)})$ and $\rho^{(i)} = \| R_{22}^{(i)}\|_2/\sigma_{\min}(R_{11}^{(i)})$, then for $j=1,\cdots, k$,
\begin{equation}\label{Itebound0}
  \frac{\sigma_j(R_{11}^{(i)})^{-1} - \sigma_{j}(B)^{-1}}{\sigma_{j}(B)^{-1}} \leq \left(\frac{\sigma_{k+1}(B)}{\sigma_{k}(B)}\right)^{2i}
  \mathcal{O}\left ( \frac{n^{(4i+1)/2}  \| R_{22}^{(0)}\|_2}{ [1-(\rho^{(i)})^2] (\gamma^{(i)})^2} \right ).
\end{equation}
\end{lemma}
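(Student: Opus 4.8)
This is Theorem~4.1 of \cite{Huckaby2002}; the route I would take to establish it is an induction on the number $i$ of inner unpivoted sweeps, resting on two ingredients. The base case $i=0$ is nothing but the rank-revealing bound of Mathias and Stewart (Theorem~3.1 of \cite{Mathias1993}) already used in the proof of Theorem~\ref{Thm3.1}: under the hypotheses \eqref{LRbound} the partition of the first factor $R^{(0)}$ gives
\begin{equation*}
\frac{\sigma_j(R_{11}^{(0)})}{\sigma_j(B)} \;\ge\; \left( 1 - \frac{\|R_{12}^{(0)}\|_2^2}{[1-(\rho^{(0)})^2](\gamma^{(0)})^2} \right)^{1/2}, \qquad 1\le j\le k ,
\end{equation*}
and rewriting this in terms of the reciprocals $\sigma_j^{-1}$, together with $\|R_{22}^{(0)}\|_2\le\sqrt{(k+1)(n-k)}\,\sigma_{k+1}(B)$, yields \eqref{Itebound0} for $i=0$.

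The core of the argument is the inductive step, and here the key observation is that each inner sweep $[R^{(i-1)}]^T=Q^{(i)}R^{(i)}$ performs one step of the Cholesky--LR iteration on the Gram matrices $M_i:=[R^{(i)}]^TR^{(i)}$: one checks directly that $[R^{(i)}]^TR^{(i)}=R^{(i-1)}[R^{(i-1)}]^T$ and that $R^{(i)}[R^{(i)}]^T$ is orthogonally similar to $[R^{(i-1)}]^TR^{(i-1)}$, so the spectrum of $M_i$ is constantly $\{\sigma_j(B)^2\}$ while its leading $k\times k$ principal block is driven toward the dominant invariant subspace. Classical convergence theory for this iteration shows that the off-diagonal coupling $R_{12}^{(i)}$ between the leading and trailing parts is damped by the factor $\sigma_{k+1}(B)^2/\sigma_k(B)^2$ at every sweep, so that after $i$ sweeps $\|R_{12}^{(i)}\|_2$ is at most $(\sigma_{k+1}(B)/\sigma_k(B))^{2i}\,\|R_{22}^{(0)}\|_2$ up to a factor growing only polynomially in $n$. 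I would then apply the single-step Mathias--Stewart estimate to the partitioned $R^{(i)}$ rather than to $R^{(0)}$, pass to reciprocals as in the base case, and substitute this damped estimate, which delivers \eqref{Itebound0}.

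The step I expect to be the main obstacle is the bookkeeping that accompanies this induction. One must check that the auxiliary bounds $\rho^{(i)}<1$ and the lower estimate on $\gamma^{(i)}=\sigma_{\min}(R_{11}^{(i)})$ propagate from one sweep to the next, so that the recursion does not stall and the denominator $1-(\rho^{(i)})^2$ stays bounded away from zero; and one must control how the $\sqrt{k(n-k+1)}$- and $\sqrt{(k+1)(n-k)}$-type factors accumulate over the $i$ sweeps, since a crude per-sweep perturbation estimate would inflate them faster than the $n^{(4i+1)/2}$ appearing in \eqref{Itebound0}. Obtaining precisely that exponent is the technical heart of \cite{Huckaby2002}, and it relies on exploiting the upper-triangular structure carried along by the iteration rather than treating each sweep as a black-box perturbation.
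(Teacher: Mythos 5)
The paper offers no proof of this statement at all: it is quoted verbatim, with citation, as Theorem~4.1 of Huckaby and Chan \cite{Huckaby2002}, so there is no internal argument to compare yours against. Your sketch does follow the same circle of ideas as that cited source --- the identity $[R^{(i)}]^{T}R^{(i)}=R^{(i-1)}[R^{(i-1)}]^{T}$, which identifies each unpivoted sweep with one QR/LR step on the Gram matrix, combined with the Mathias--Stewart block bound of \cite{Mathias1993} --- and this is indeed how Huckaby and Chan proceed, building on the Chandrasekaran--Ipsen analysis \cite{Chandrasekaran1995} of the QR algorithm for singular values.

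As a proof, however, your proposal has genuine gaps, which you partly concede. The central quantitative claim --- that the coupling block is damped by a factor $\bigl(\sigma_{k+1}(B)/\sigma_{k}(B)\bigr)^{2}$ per sweep while the accompanying constants grow only polynomially, landing exactly on the exponent $n^{(4i+1)/2}$, and that $\rho^{(i)}<1$ together with the lower bound on $\gamma^{(i)}$ survives each sweep --- is asserted rather than derived, and that derivation is essentially the entire content of the cited theorem; invoking ``classical convergence theory for this iteration'' does not supply it. There is also a mismatch in your base case: the Mathias--Stewart estimate produces a numerator $\|R_{12}^{(0)}\|_{2}^{2}$ (off-diagonal block, squared), whereas the bound \eqref{Itebound0} carries $\|R_{22}^{(0)}\|_{2}$ to the first power; bridging the two requires the further step, present in Huckaby--Chan, of bounding the off-diagonal block generated at each sweep in terms of $\|R_{22}^{(i-1)}\|_{2}$ and $\sigma_{k+1}(B)$, so the $i=0$ case is not the immediate rewriting you claim. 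In short, your outline points at the right mechanism and the right literature, but it is a road map to the proof in \cite{Huckaby2002}, not a self-contained proof.
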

From the interlacing property of singular values, $\sigma_{j}(B)  \geq \sigma_j(R_{11}^{(i)})$. The formula \eqref{Itebound0} implies that
\begin{equation}\label{Itebound1}
  \frac{\sigma_{j}(B)  - \sigma_j(R_{11}^{(i)}) }{\sigma_{j}(B) } \leq \left(\frac{\sigma_{k+1}(B)}{\sigma_{k}(B)}\right)^{2i}
  \mathcal{O}\left ( \frac{n^{(4i+1)/2}  \| R_{22}^{(0)}\|_2}{ [1-(\rho^{(i)})^2] (\gamma^{(i)})^2} \right ).
\end{equation}
It means that if the relative error bound  $ [\sigma_{j}(B)  - \sigma_j(R_{11}^{(i)}) ]/\sigma_{j}(B)  $  is improved with a quadratic factor  by one inner QR decomposition.

\begin{theorem}\label{Thm2}
 Under the hypotheses of Algorithm {\rm\ref{ERQLP}}, $B = Q_B L P_B^T$, where
  $Q_B = Q^{(0)} Q^{(2)}\cdots Q^{(d)}$, $L = [R^{(d)}]^T$,  $P_B =  \Pi Q^{(1)}Q^{(3)}\cdots Q^{(d-1)}$ , and
    the upper triangular matrix $R^{(0)}$ and $R^{(i)}$ are the {\it R}-factor in the first  pivoted QR decomposition and the $i$-th inner QR decomposition, respectively, i.e.,
    \begin{equation*}
      B \Pi = Q^{(0)} R^{(0)},\quad [R^{(i-1)}]^T = Q^{(i)} R^{(i)},
    \end{equation*}
where $i=1,\cdots,d$. Let $L$  and $R^{(d)}$ be  partitioned as
  \begin{equation*}\label{partitial1}
  L       =  \begin{bmatrix}  L_{11}      &    0         \\ L_{12}       & L_{22}        \end{bmatrix},\quad
  R^{(d)} =  \begin{bmatrix}  R_{11}^{(d)}& R_{12}^{(d)} \\      0       & R_{22}^{(d)}  \end{bmatrix}.
\end{equation*}
Assume that the bounds in  \eqref{LRbound} hold. Then for $ 1 \leq j \leq k$,
 \begin{equation}\label{Itebound2}
\mathbb{E}\left [ \frac{\sigma_{j}(A) - \sigma_{j} (L_{11})}{ \sigma_{j}(A)}\right] \leq
1 - \frac{1}{\sqrt{1 + \mathcal{C}^2\tau_j^2}}  + \left (  \frac{\sigma_{k+1}(B)}{\sigma_{k}(B)}                                                                                                  \right)^{2d}
                                               \mathcal{O}\left (  \frac{ n^{(4d+1)/2} \| R_{12}^{(0)}\|_2^2}{\left  [1 - (  \rho^{(d)}  )^2 \right ] (\gamma^{(d)})^2} \right ).
\end{equation}
\end{theorem}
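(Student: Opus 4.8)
The plan is to reproduce, almost line for line, the proof of Theorem \ref{Thm3.1}, with the single deterministic rank-revealing estimate of \cite{Mathias1993} used there replaced by the iterated estimate of Huckaby and Chan recorded in the Lemma immediately above, and with the randomized range-finder bound \eqref{ERRF3} spliced in unchanged.

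First I would record the trivial consequences of $L=[R^{(d)}]^T$ and the stated block partitions, namely $L_{11}=[R_{11}^{(d)}]^T$ and $L_{12}=[R_{12}^{(d)}]^T$, so that $\sigma_j(L_{11})=\sigma_j(R_{11}^{(d)})$ for every $j$. Since the bounds \eqref{LRbound} are assumed, the Lemma (Theorem 4.1 of \cite{Huckaby2002}) applies with $i=d$; using it together with the interlacing inequality $\sigma_j(B)\ge\sigma_j(R_{11}^{(d)})$ gives, for $1\le j\le k$, an estimate of the form
\begin{equation*}
\frac{\sigma_j(B)-\sigma_j(L_{11})}{\sigma_j(B)}\le\left(\frac{\sigma_{k+1}(B)}{\sigma_k(B)}\right)^{2d}\mathcal{O}\!\left(\frac{n^{(4d+1)/2}\,\|R_{12}^{(0)}\|_2^2}{[1-(\rho^{(d)})^2](\gamma^{(d)})^2}\right),
\end{equation*}
which is the $i=d$ case of \eqref{Itebound1} (the exact form of the big-$\mathcal{O}$ factor is discussed below). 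Exactly as in the proof of Theorem \ref{Thm3.1}, I would then invoke Lemma \ref{Thm31} with the orthonormal matrix $V$ produced by Algorithm \ref{ERQLP} to get $\sigma_j(B)=\sigma_j(V^TA)\le\sigma_j(A)$, so replacing $\sigma_j(B)$ by $\sigma_j(A)$ in the denominator only enlarges the left-hand side, and hence $[\sigma_j(B)-\sigma_j(L_{11})]/\sigma_j(A)$ obeys the same bound.

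To finish, I would split $\sigma_j(A)-\sigma_j(L_{11})$ as $[\sigma_j(A)-\sigma_j(VB_k)]+[\sigma_j(VB_k)-\sigma_j(L_{11})]$ and use that $V$ has orthonormal columns together with $\sigma_j(VB_k)=\sigma_j(B_k)=\sigma_j(B)$ for $1\le j\le k$ (the leading $k$ singular values of the rank-$k$ truncation $B_k$ agree with those of $B$), so the second bracket equals $\sigma_j(B)-\sigma_j(L_{11})$. Taking expectations, bounding $\mathbb{E}[(\sigma_j(A)-\sigma_j(VB_k))/\sigma_j(A)]$ by \eqref{ERRF3} --- which is legitimate because the hypotheses of Algorithm \ref{ERQLP} subsume those of Algorithm \ref{RQLP}, and hence those of Lemma \ref{Thm56} --- while the bound on the second bracket is deterministic, I arrive at \eqref{Itebound2}.

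I do not expect a genuine conceptual obstacle, since the skeleton is identical to that of Theorem \ref{Thm3.1}; the delicate point is internal to the cited Lemma and concerns matching the deterministic prefactor precisely. One has to check that iterating the single transpose-and-QR contraction step of \cite{Huckaby2002} exactly $d$ times really yields the factor $(\sigma_{k+1}(B)/\sigma_k(B))^{2d}$, the polynomial prefactor $n^{(4d+1)/2}$, and the \emph{square} of the first off-diagonal block $\|R_{12}^{(0)}\|_2$ (rather than a single-sweep quantity), while correctly carrying $\gamma^{(i)}$ and $\rho^{(i)}$ through the iteration; this bookkeeping, and in particular locating where the requirement $\rho^{(i)}<1$ in \eqref{LRbound} is actually invoked, is the step I would treat most carefully.
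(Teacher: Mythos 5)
Your proposal matches the paper's own (one-line) proof: Theorem \ref{Thm2} is obtained exactly as Theorem \ref{Thm3.1}, by combining the range-finder bound \eqref{ERRF3} with the iterated bound \eqref{Itebound1} at $i=d$, after identifying $\sigma_j(L_{11})=\sigma_j(R_{11}^{(d)})$ and using Lemma \ref{Thm31} to pass from $\sigma_j(B)$ to $\sigma_j(A)$. The one loose end you flag --- the lemma's prefactor involving $\|R_{22}^{(0)}\|_2$ versus the $\|R_{12}^{(0)}\|_2^2$ appearing in \eqref{Itebound2} --- is an inconsistency internal to the paper, which its own proof ("directly") does not address either.
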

\begin{proof}
  The proof is similar to those given in Theorem \ref{Thm3.1}. Based on \eqref{ERRF3} and \eqref{Itebound1}, we can get \eqref{Itebound2} directly.
\end{proof}
From Theorem \ref{Thm2},  one can observe that for $j=1,2,\cdots,k$,  the larger the iteration number $d$  in Algorithm \ref{ERQLP} is,  the smaller  the expected relative error bound  $ [\sigma_{j}(A)  - \sigma_j(L_{11}) ]/\sigma_{j}(A)$ is.  The  $L$-values from ERQLP will be the better approximations to  the singular values of $A$   than those of RQLP algorithm. But at the same time,  the number of operations at each iteration will increase. A lot of numerical tests show that $d=2$ or $4$ is an appropriate choice.

\section{The block version of RQLP}\label{sec4}\noindent
It is straightforward to convert the RQLP algorithm to a block scheme, denoted by the block RQLP (BRQLP) algorithm.  Suppose the target rank $\ell$ and the block size $b$ satisfy $\ell=hb$ for some integer $h$.  Partition the Gaussian random matrix  $\Omega$ into slices $\{ \Omega_j \}_{j=1}^h$, each of size is $n$-by-$b$,  so that  $\Omega = [ \Omega_1,~\Omega_2,\cdots,~\Omega_h]$,  $Y = A\Omega = [ Y_1,~Y_2,\cdots,~Y_h]$, where $Y_j = A \Omega_j$ for $j=1,\cdots,h$. Analogously,  partition the orthonormal matrix $V$ and the reduced matrix $B$ in groups of $b$ columns and $b$ rows, respectively,
\begin{equation*}
  V = [ V_1,~V_2,\cdots,~V_h],\quad   B =
  \begin{bmatrix}
 B_1\\\vdots \\ B_h
  \end{bmatrix}.
\end{equation*}
Inspired by the block version of the randomized range finder in \cite{Martinsson2016}, we first initiate the algorithm by setting $A^{(0)} = A$ and construct the matrices $\{ V_j \}_{j=1}^h$ and $\{ B_j \}_{j=1}^h$ one at a time.

We further  compute the unpivoted QR decomposition of $Y_j  $  because the R-factor is never used, i.e.,
\begin{equation}
  [V_j,\sim] = {\sf qr}(Y_j).
\end{equation}
Just like the classical (non-block) Gram-Schmidt orthogonalization procedure \cite{Demmel1997}, the round-off errors will cause loss of orthonormality among the columns in $[ V_1,~V_2,~\cdots,~V_j]$ . The remedy is to adopt the reorthogonalized strategy.
\begin{equation}
   [V_j,\sim] = {\sf qr}(V_j   -   \Sigma_{i=1}^{j-1} V_i V_i^T V_j).
\end{equation}
Next, calculate the  block reduced matrix $B_j$  and  $A^{(j)}$ by
\begin{equation}\label{BVA}
   B_j = V_j^TA^{(j-1)}, \quad A^{(j)} = A^{(j-1)} - V_j B_j .
\end{equation}
Then apply the standard pivoted QLP decomposition to $B_j$ such that
\begin{equation}
  B_j   = Q_B^{(j)}  L_j [P_B^{(j)} ]^T ,
\end{equation}
 and update $Q_j = V_j Q_B^{(j)}$, ~$P_j = P_B^{(j)}$.   To be precise,  the pseudo-codes of BRQLP algorithm is shown in Algorithm \ref{BRQLP}
 \begin{algorithm}[!h]
\caption{{\sc Block Randomized QLP decomposition Algorithm}}\label{BRQLP}
{\bf Input: }{\it $A \in \mathbb{R}^{m\times n}$, a target rank $k\geq 2$, an oversampling parameter $p\geq 2$, a block size $b$.}\\
{\bf Output: } $[Q,~L,~P] = {\sf BRQLP}(A,k,p,b)$.\\
\begin{shrinkeq}{-4.5ex}
\begin{flalign*}
&1.~\Omega = {\sf rand}(n, \ell).                                                                  &&\%~{\sf Gaussian~random~matrix}~ \Omega {\sf~is~of~size}~n\times \ell\qquad\\
&2.~{\sf for}~j=1, \cdots, h                                                                          && \\		
&3.~\quad  \Omega_j = \Omega(:,~(j-1)b + 1 : jb),~ Y_j = A\Omega_j.       && \\
&4.~\quad  [V_j,\sim] = {\sf qr}(Y_j).		                                                      && \%~{\sf Range~finder}\\
&5.~\quad  [V_j,\sim] = {\sf qr}(V_j-\Sigma_{i=1}^{j-1} V_i V_i^T V_j).	  && \%~{\sf Reorthogonalized~process}\\            	
&6.~\quad  B_j=V^T_j A^{(j-1)}.		                                                              && \\
&7.~\quad  A^{(j)} = A^{(j-1)} - V_j B_j.		                                              && \\
&8.~\quad  [Q_B^{(j)},~L_j,~P_B^{(j)}] = {\sf qlp}(B_j).                            && \%~{\sf Standard~QLP~decomposition} \\          	 	
&9.~\quad  Q_j = V_j Q_B^{(j)},~P_j = P_B^{(j)}.		                                  &&\\
&10.~{\sf end}    \\
\end{flalign*}
\end{shrinkeq}
\vspace{2pt}
11.~$Q = [ Q_1,~Q_2,\cdots,~Q_h]$, $L = {\sf blkdiag}(L_1,~L_2,\cdots,~L_h)$, $P = [ P_1,~P_2,\cdots,~P_h]$.
\vspace{2pt}
\end{algorithm}

From \eqref{BVA} we can have  $B_j = V_j^TA$. It follows that the error resulting from Algorithm \ref{BRQLP} is
\begin{equation}
\begin{aligned}
    A - QLP^T &  =  A - [ Q_1,\cdots,~Q_h] \begin{bmatrix}L_1 &&\\ & \ddots&\\&&L_s\end{bmatrix} [ P_1,~P_2,\cdots,~P_h]^T \\
                       & =  A - [ V_1,\cdots,~V_h]  \begin{bmatrix}B_1 \\  \vdots \\B_h\end{bmatrix} = A - VB = A - VV^TA.\\
\end{aligned}
\end{equation}
As shown in \cite{Martinsson2016},  for a fixed Gaussian matrix $\Omega$, the projectors from the block and unblock version of the randomized range finder are identical. Thus, the average-case analysis of Algorithm \ref{RQLP} can be similarly applied to  Algorithm \ref{BRQLP}.

For the block  randomized QLP algorithm, we assume that it stops after $s$ steps. Then, the runtime of BRQLP decomposition is
 \begin{equation*}
 \begin{aligned}
  T_{ \sf BRQLP}  &\sim  \sum_{j=1}^h  \left[  3C_{\sf  mm} mnb + 2C_{\sf qr} mb^2  + 2(j-1)C_{\sf  mm} mb^2 +  C_{\sf  cpqr}  nb^2  + C_{\sf  mm} mb^2 \right]\\
                              & \sim  3C_{\sf  mm} mn \ell  +   \frac{2}{h} C_{\sf qr} m\ell^2   +  C_{\sf  mm} m\ell ^2   + \frac{1}{h}  C_{\sf  cpqr}  n\ell^2  + \frac{1}{h}  C_{\sf  mm}  m\ell^2.
 \end{aligned}
\end{equation*}
Comparing with RQLP, we see that the BRQLP involves one additional term of $C_{\sf  mm} mn \ell$, but needs less time to execute full QR decomposition.
\section{Numerical experiments}\label{sec5}\noindent
In this section,  we give several examples to illustrate that the randomized algorithms are as accurate as the classical methods. All experiments are carried out on a Founder desktop PC with Intel(R) Core(TM) i5-7500 CPU 3.40 GHz by MATLAB R2016(a) with a machine precision of $10^{-16}$.
%Two types of test problems are considered:  one is synthetic examples and the other is from Hansen's regularization toolbox \cite{Hansen2008}.
\begin{example}
 Specifically,  like Tropp in \cite{Tropp2017}, two  synthetic matrices are given by its SVD forms:
  \begin{equation*}
        A = U^{(A)}  \Sigma^{(A)}  (V^{(A)})^{T}\in \mathbb{R}^{n\times n},
\end{equation*}
where $U^{(A)}\in \mathbb{R}^{n\times n}$ and $V^{(A)}\in \mathbb{R}^{n\times n}$ are random orthogonal matrices, the $\Sigma^{(A)}$ is diagonal with entries given by the following rules.
\begin{enumerate}[\sffamily I).]%无衬线字体
\addtolength{\itemsep}{ -0.5 em} % 设置文献之间的间距
     \item Polynomially decaying spectrum~({\sf pds}):
%       $\Sigma^{(A)} = {\rm diag}(\underbrace {1,\cdots,1}_t,2^{-s},3^{-s},\cdots,(n-t+1)^{-s})\in \mathbb{R}^{n\times n}$,
       $\Sigma^{(A)} = {\rm diag}( 1,\cdots,1,2^{-s},3^{-s},\cdots,(n-t+1)^{-s})$,
     \item  Exponatially decaying spectrum~({\sf eds}):
%      $ \Sigma^{(A)} = {\rm diag}(\underbrace {1,\cdots,1}_t,2^{-s},2^{-2s},\cdots,2^{-(n-t)s})\in \mathbb{R}^{n\times n}$,
       $ \Sigma^{(A)} = {\rm diag}(1,\cdots,1,2^{-s},2^{-2s},\cdots,2^{-(n-t)s})$,
   \end{enumerate}
 where the nonnegative constants $t$  and $s$ control the rank of the significant part of the matrix and  the rate of decay, respectively.
 \end{example}

\begin{example}
 The remaining  testing problems are from P. C. Hansen's  Regularization Tools (version 4.1) \cite{Hansen2008}. The ill-conditioned matrix $A$ are generated by the discretization of the Fredholm integral equation with  the first kind square integrable kernel
 \begin{equation*}
 \int_a^b \mathrm{K}(y,z)~ \textit{f}(z)~ dz = \textit{g}(y),  \quad c \leq y \leq d.
 \end{equation*}
When the Galerkin discretization method is used, we choose the examples {\sf heat} and {\sf  phillips}.
 \end{example}
In the following test,  we compare the proposed algorithms against several existing algorithms in terms of execution time and accuracy.  For more details,  we fix the matrix size $m = n$ and set the target rank $k=120$. The over-sampling parameter is $p=5$.  The approximate errors for the  singular values of $A$ are defined by
\begin{equation*}
 {\rm QLP/RQLP/ ERQLP:}~  err = \max\left\{   | \sigma_{j}(A) -  \sigma_{j}(L)  | :  j=1,\cdots,k    \right\}.
\end{equation*}

In Figure \ref{fig1} -- Figure\ref{fig2}, we display the  ability of $R$-values computed by CPQR and $L$-values computed by  QLP, RQLP and ERQLP, to capture the singular values of the input matrix as described in Section \ref{sec3}. It  shows  that the $L$-values can track the singular values of $A$ far better than the $R$-values.

 Table \ref{tab11} -- Table \ref{tab14} show the measured total CPU times $T$ (in seconds) and approximate error $err$, which lead us to make several observations:
 \begin{enumerate}[\sffamily (1).]%无衬线字体
\addtolength{\itemsep}{ -0.5 em} % 设置文献之间的间距
     \item Comparing the accuracy of   QLP with its randomized variants, QLP and  RQLP have almost the same approximation error.  However, when two or four times inner QR iterations are taken, ERQLP performs a higher approximation accuracy than  RQLP and QLP.
     \item  Comparing the speed of   QLP with its randomized variants, randomized QLP algorithms  (RQLP and ERQLP) are decisively faster than QLP in all cases.  In more detail, we see that RQLP and ERQLP have the similar speed, with RQLP being slightly faster.
   \end{enumerate}

\begin{table}[!ht]
	\caption{Computational the CPU time and  error of different factorizations for the example {\sf pds}, where $t=30$ and $s=2$.}
	\centering
	\begin{tabular}{c ccc ccc ccc cc }
	\hline
    \multirow{2}{1cm}{{\rm n}} & \multicolumn{2}{l }{{\rm QLP}} &  &\multicolumn{2}{l }{{\rm RQLP}}& &\multicolumn{2}{l }{ERQLP~($d=2$)} & & \multicolumn{2}{l }{ERQLP~($d=4$)}\\
    \cline{2-3} \cline{5-6}\cline{8-9}\cline{11-12}
                  &  $T(s)$    & $err$     &  &  $T(s)$    & $err$   & &  $T(s)$    & $err$  & &  $T(s)$    & $err$                                  \\
     \hline
       2000    &  2.757  & 9.55e-02  &  &0.042         & 9.32e-02  & &0.078    & 3.58e-02   & & 0.082         & 2.50e-02        \\
       4000    &24.414  & 5.34e-02  &  &0.145         & 5.02e-02  & &0.282    & 5.20e-02   & & 0.267         & 2.97e-02      \\
       6000    &74.175  & 6.36e-02  &  &0.312         & 6.20e-02  & &0.551    & 2.80e-02   & & 0.554         & 2.09e-02       \\
     \hline
    \end{tabular}
\label{tab11}
\end{table}

\begin{table}[!ht]
	\caption{Computational the CPU time and  error of different factorizations for the example {\sf eds}, where $t=30$ and $s=1/20$.}
	\centering
	\begin{tabular}{c ccc ccc ccc cc }
	\hline
    \multirow{2}{1cm}{{\rm n}} & \multicolumn{2}{l }{{\rm QLP}} &  &\multicolumn{2}{l }{{\rm RQLP}}& &\multicolumn{2}{l }{ERQLP~($d=2$)} & & \multicolumn{2}{l }{ERQLP~($d=4$)}\\
    \cline{2-3} \cline{5-6}\cline{8-9}\cline{11-12}
                                                   &  $T(s)$    & $err$                            &  &  $T(s)$    & $err$                             & &  $T(s)$    & $err$                                 & &  $T(s)$    & $err$                                  \\
     \hline
       2000    & 2.741   & 1.65e-01  &  &0.041         & 1.68e-01  & &0.079     & 1.22e-01   & & 0.083     & 1.07e-02       \\
       4000    &11.901  & 1.76e-02  &  &0.145         & 1.75e-01  & &0.274     & 1.45e-01   & & 0.255      & 9.46e-02      \\
       6000    &75.822  & 1.69e-01  &  &0.277         & 1.65e-01  & &0.544     & 1.09e-01   & & 0.527      & 7.95e-02      \\
     \hline
    \end{tabular}
\label{tab12}
\end{table}

\begin{table}[!ht]
	\caption{Computational the CPU time and  error of different factorizations for the example {\sf heat}.}
	\centering
	\begin{tabular}{c ccc ccc ccc cc }
	\hline
    \multirow{2}{1cm}{{\rm n}} & \multicolumn{2}{l }{{\rm QLP}} &  &\multicolumn{2}{l }{{\rm RQLP}}& &\multicolumn{2}{l }{ERQLP~($d=2$)} & & \multicolumn{2}{l }{ERQLP~($d=4$)}\\
    \cline{2-3} \cline{5-6}\cline{8-9}\cline{11-12}
                                                   &  $T(s)$    & $err$                            &  &  $T(s)$    & $err$                             & &  $T(s)$    & $err$                                 & &  $T(s)$    & $err$                                  \\
     \hline
       2000    &4.039    & 8.62e-02  &  &0.086         & 8.62e-02  & &0.136          & 2.16e-02   & & 0.137         & 7.96e-03  \\
       4000    &27.572  & 8.62e-02  &  &0.235         & 8.62e-02  & &0.411          & 2.16e-02   & & 0.427         & 7.96e-03     \\
       6000    &95.160  & 8.62e-02  &  &0.567         & 8.62e-02  & &0.965          & 2.16e-02   & & 0.958         & 7.96e-03    \\
     \hline
    \end{tabular}
\label{tab13}
\end{table}

\begin{table}[!ht]
	\caption{Computational the CPU time and  error of different factorizations for the example {\sf phillips}.}
	\centering
	\begin{tabular}{c ccc ccc ccc cc }
	\hline
    \multirow{2}{1cm}{{\rm n}} & \multicolumn{2}{l }{{\rm QLP}} &  &\multicolumn{2}{l }{{\rm RQLP}}& &\multicolumn{2}{l }{ERQLP~($d=2$)} & & \multicolumn{2}{l }{ERQLP~($d=4$)}\\
    \cline{2-3} \cline{5-6}\cline{8-9}\cline{11-12}
                                                   &  $T(s)$    & $err$                            &  &  $T(s)$    & $err$                             & &  $T(s)$    & $err$                                 & &  $T(s)$    & $err$                                  \\
     \hline
       2000    &  3.194  & 7.12e-01  &  &0.086         & 7.10e-01  & &0.157          & 3.88e-01   & & 0.156         & 2.62e-01 \\
       4000    &25.026  & 7.12e-01  &  &0.264         & 7.06e-01  & &0.436          & 3.86e-01   & & 0.463         & 2.72e-01 \\
       6000    &91.232  & 7.12e-01  &  &0.563         & 7.08e-01  & &0.944          & 4.15e-01   & & 0.974         & 2.26e-01  \\
     \hline
    \end{tabular}
\label{tab14}
\end{table}

\begin{figure}[!h]
	\centering
	\subfigure{
		\includegraphics[height=5.2cm]{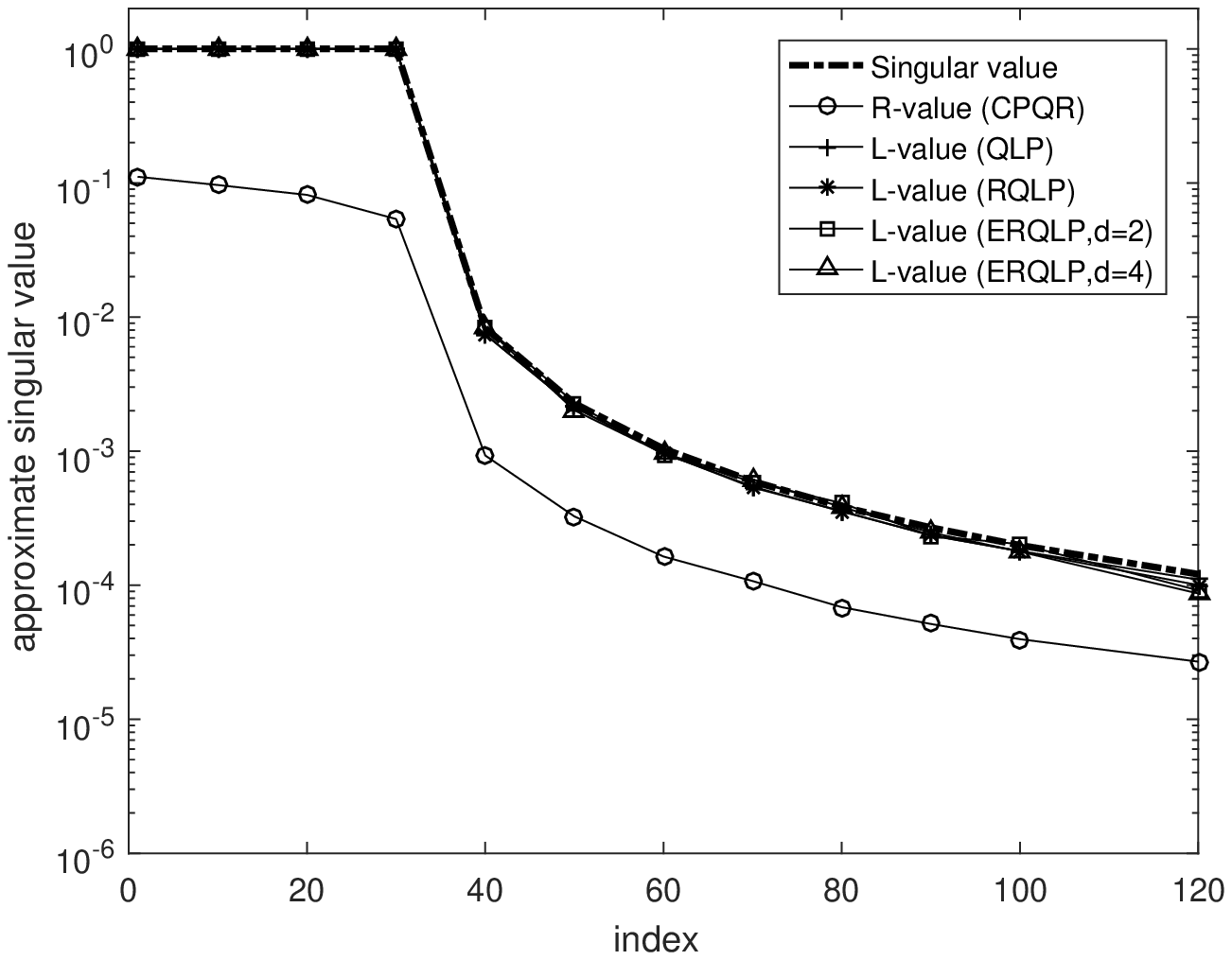}}
	%\hspace{-0.2cm}
	\subfigure{
		\includegraphics[height=5.2cm]{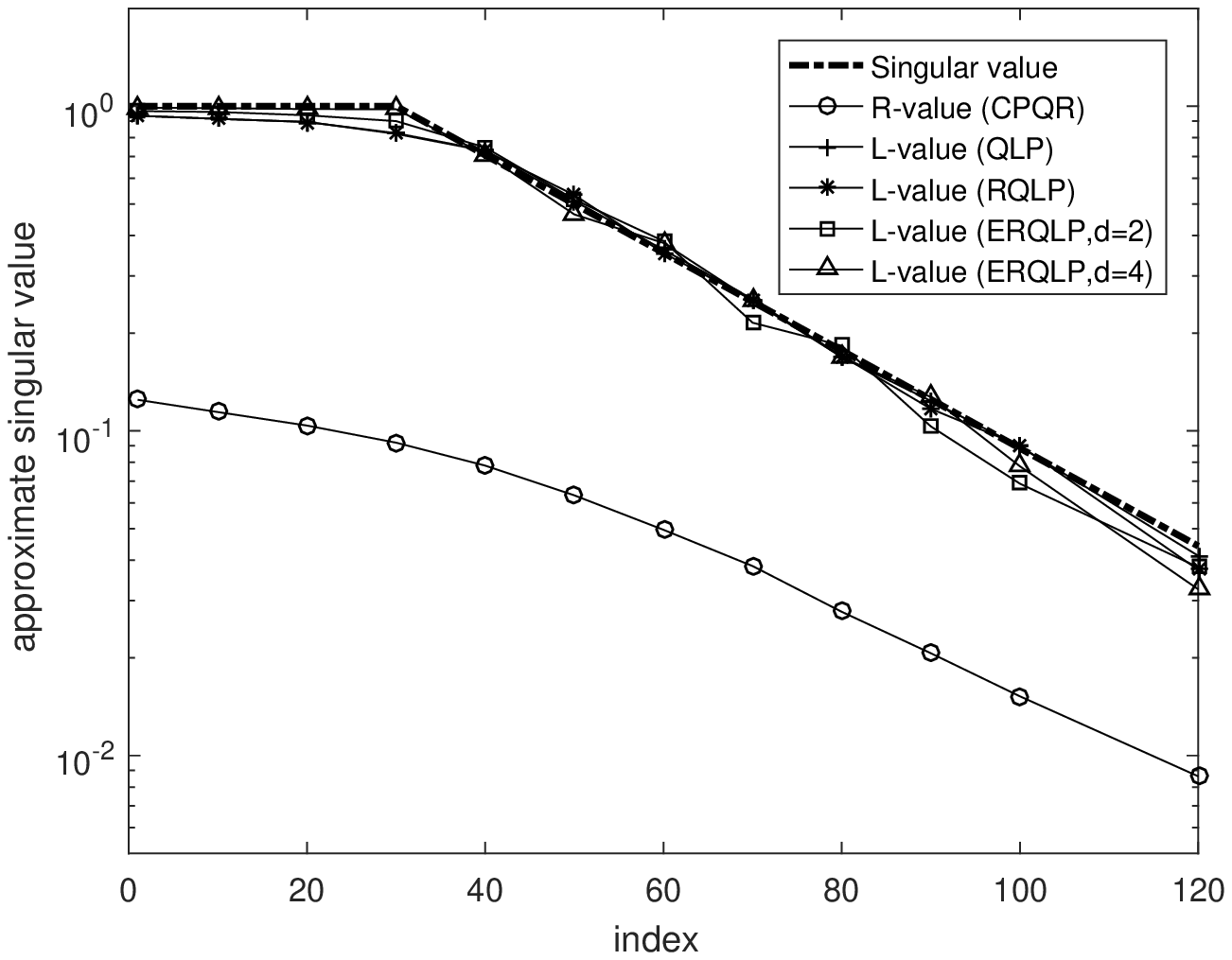}}
	\caption{The approximations to the singualr values of $A$ for the examples {\sf pds} (left) and {\sf eds} (right). Rank control parameter $t$ = 30. Rate of decay control parameters are set by $s$ = 2 and  $s$ = 1/20 for {\sf pds} and {\sf eds}, respectively. The matrix size is $4000 \times 4000$. }
	\label{fig1} %% label for entire figure
\end{figure}

\begin{figure}[!h]
	\centering
	\subfigure{
		\includegraphics[height=5.2cm]{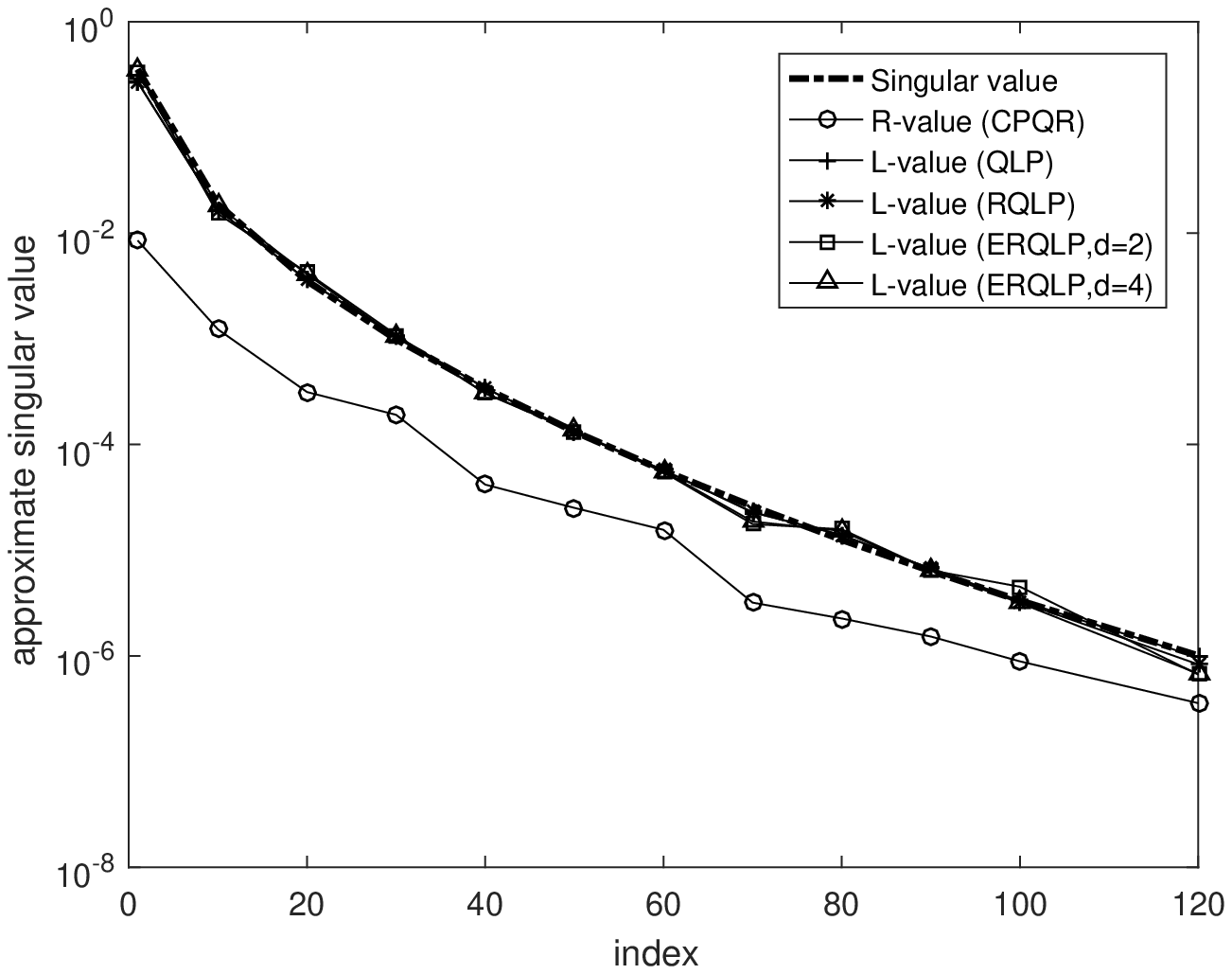}}
	%\hspace{-0.2cm}
	\subfigure{
		\includegraphics[height=5.2cm]{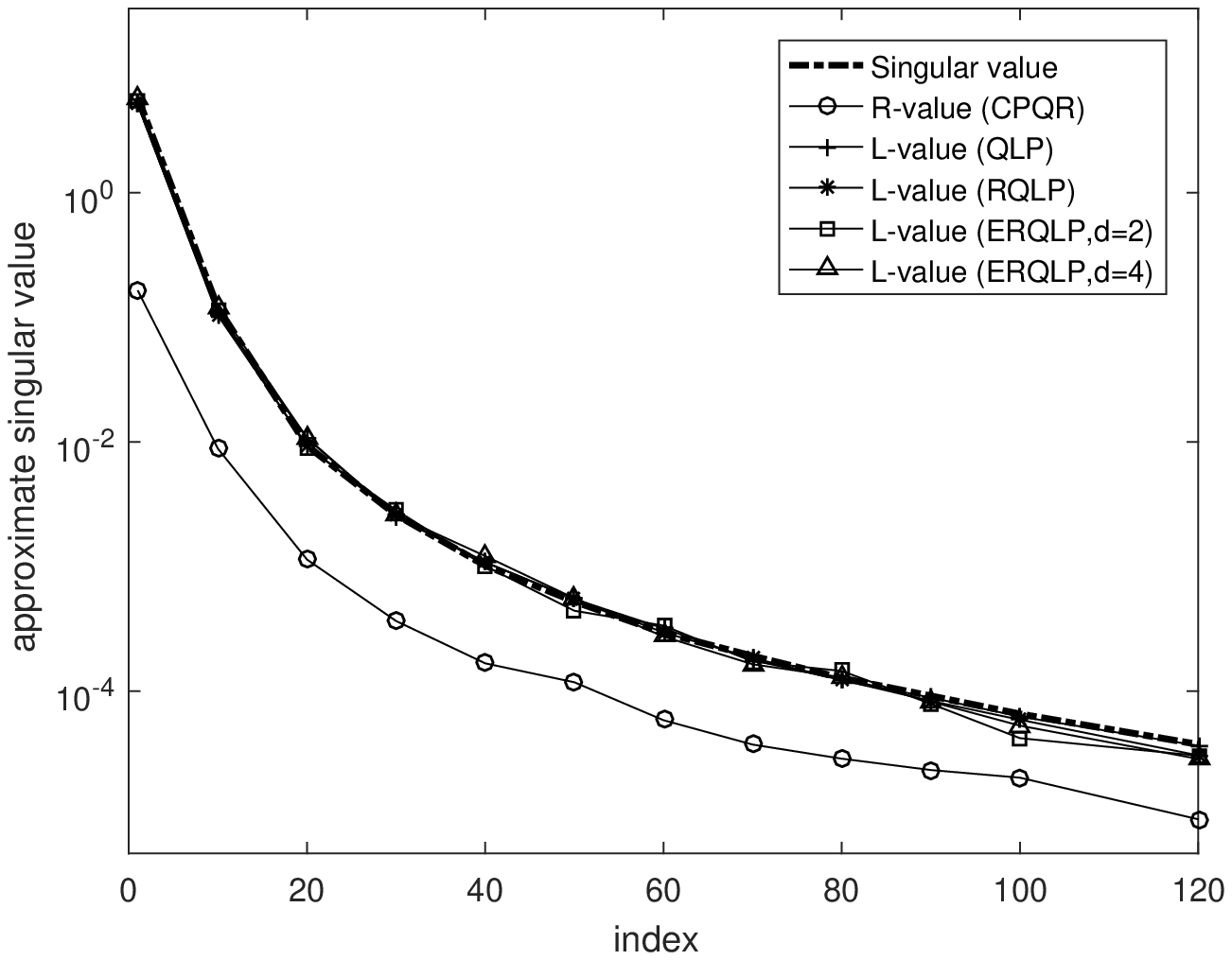}}
	\caption{The approximations to the singualr values of $A$ for the example {\sf heat} (left) and {\sf  phillips} (right). The matrix size is $4000 \times 4000$. }
	\label{fig2} %% label for entire figure
\end{figure}

\section{Concluding remarks}\label{sec6}\noindent
Based on the randomized range finder algorithm, two randomized QLP decomposition are proposed: RQLP and ERQLP, which can be used for producing standard low rank factorization, like a partial QR decomposition or a partial singular value decomposition and have several attractive properties.
\begin{enumerate}[\sffamily 1.]%无衬线字体
\addtolength{\itemsep}{ -0.5 em} % 设置文献之间的间距
     \item The theoretical cost of the implementation of  RQLP and ERQLP only need $\mathcal{O}(mnk)$.
     \item  It is easy to convert the RQLP algorithm to a block scheme.
     \item  The $L$-values from ERQLP can track the singular values of $A$  better than QLP, in particular, much better than the $R$-values given by CPQR. The computational time of randomized QLP variants  is much less than the original deterministic one.  And we provide mathematical justification and numerical experiments for these phenomena.
   \end{enumerate}
Moreover, the randomized QLP decomposition are suitable for the {\it fixed-rank} determination problems in practice. Different from \eqref{fixedrank}, we can seek $V$ and $B$ with a suitable rank such that
\begin{equation*}
  \| E \|_F^2 =\| A\|_F^2  - \|B \|_F^2  \leq  \varepsilon,
\end{equation*}
where $\varepsilon$ is the accuracy tolerance, and set $\| E \|_F$  as the error indicator which leads to a {\it auto-rank} randomized QLP  algorithm. That is  to say,  the rank of factor matrices needs to be automatically determined by some accuracy conditions.  We will continue to further in-depth study from the viewpoint of both theory and computations for this problem in future.

\section*{References}

\end{document}